\newcommand{\R}{\mathbb R}
\newtheorem{theorem}{Theorem}[section]
\newtheorem{lemma}[theorem]{Lemma}
\newtheorem{corollary}[theorem]{Corollary}
\theoremstyle{definition}
\theoremstyle{remark}
\newtheorem{remark}[theorem]{Remark}
\numberwithin{equation}{section}
\begin{document}
\setlength{\baselineskip}{1.2\baselineskip}
\title[Curvature estimates for the Monge-Amp\`{e}re equation]{Curvature estimates for the level sets of solutions of the
 Monge-Amp\`{e}re equation $\det D^2 u=1$}
\author{Chuanqiang Chen}
\address{School of Mathematical Sciences\\
         University of Science and Technology of China\\
         Hefei, 230026, Anhui Province, CHINA}
\email{cqchen@mail.ustc.edu.cn}
\author{Xi-Nan Ma}
\address{School of Mathematical Sciences\\
         University of Science and Technology of China\\
         Hefei, 230026, Anhui Province, CHINA}
\email{xinan@ustc.edu.cn}
\author{Shujun Shi}
\address{School of Mathematical Sciences\\
         Harbin Normal University\\
         Harbin 150025, Heilongjiang Province, CHINA}
\email{shjshi@163.com}
\thanks{2000 Mathematics Subject Classification: 35J65}
\thanks{Keywords: Curvature estimates, Level sets, Monge-Amp\`{e}re equation.}
\thanks{The second author was supported by the National Science Fund for Distinguished Young Scholars of
China and Wu Wen-Tsun Key Laboratory of Mathematics. The third author was partially supported by
the National Natural Science Foundation of China(Grant No. 11101110) and the Foundation of Harbin
Normal University.}
\maketitle

\begin{abstract}
For the Monge-Amp\`{e}re equation $\det D^2 u=1$, we find new auxiliary
curvature functions which attain respective maximum on the boundary. Moreover,
we obtain the upper bounded estimates for the Gauss curvature and mean
curvature of the level sets for the solution to this equation.
\end{abstract}

\section{Introduction}

Monge-Amp\`{e}re equation is one of the most important fully nonlinear partial differential equations.  It has the general form
$$\det D^2 u = f(x,u, Du).$$
Here $\det D^2u$ denotes the determinant of the Hessian matrix $D^2u$, $u$ is a function in the Euclicean space $\R^n$, and $f$ is a given function.
It is elliptic when the Hessian matrix $D^2u$ is positive definite, namely $u$ is strictly convex.  There is an extensive literature on the research
of the Monge-Amp\`{e}re equation, see Guti\'{e}rrez \cite{Gut}, Trudinger-Wang \cite{TW08} and the references therein.

In 2011, Hong-Huang-Wang \cite{HHW01} studied a class of degenerate elliptic Monge-Amp\`{e}re equation in a smooth, bounded and strictly convex domain
$\Omega$ of dimension $2$.
When they proved the existence of global smooth solutions to the homogeneous Dirichlet problem, they introduced the key auxiliary function $\mathcal{H}$,
 which is the product of curvature $\kappa$ of the level line of $u$ and the cubic of $|D u|$, and got the uniformly lower bound of $\mathcal{H}$
 on $\overline{\Omega}$. These imply an estimate for the lower bound of the curvature of the level line in some sense, which inspire us to study
 the following simplest homogeneous Dirichlet problem for elliptic Monge-Amp\`{e}re equation
\begin{equation}\label{61.1}
\left\{
\begin{array}{lcl}
 \det D^2 u = 1 &\rm{in}& \Omega,  \\[8pt]
 \qquad\;\;\; u = 0 &\rm{on}& \partial \Omega. \\
\end{array} \right.
\end{equation}
We find  appropriate functions called $P$ fucnctions, prove that the $P$
functions attain their maximum on the boundary and get the upper bounded
estimates for the Gauss curvature and mean curvature of the level sets.

In order to state our results, we need the standard curvature formula of the
level sets of a function (see Trudinger \cite{Tru97}). Firstly, we recall the definition of elementary symmetric functions.
For any $k=1,2, \cdots, n$, we set
$$ \sigma_k(\lambda)=\sum_{1\leq i_1<i_2<\cdots<i_k\leq n}\lambda_{i_1}\lambda_{i_2}\cdots\lambda_{i_k},
\quad \mbox{for any}\ \lambda=(\lambda_1,\lambda_2,\cdots, \lambda_n)\in \mathbb R^{n}.$$ Let $W=(w_{ij})$ be a symmetric $n\times n$ matrix,
 and we set $$\sigma_k(W)=\sigma_k(\lambda(W)),$$ where $\lambda(W)=(\lambda_1(W),\cdots, \lambda_n(W))$ are the eigenvalues of $W$.
 We also set $\sigma_0=1$ and $\sigma_k=0$ for any $k>n$.

Since the level sets of
the strictly convex solution in the problems \eqref{61.1} are convex with
respect to the normal direction $-D u$,  we have the following formula on the $m$-th curvature of the level sets of the solution $u$,
 $m=1,2,\cdots,n-1$,
$$
\sum\limits_{k,l = 1}^n {\frac{{\partial \sigma _{m+1} (D^2 u)}}{{\partial u_{kl} }}u_k u_l |D u|^{-m-2}}.
$$
When $m=1$, $$H=\sum\limits_{k,l = 1}^n {\frac{{\partial \sigma _{2} (D^2 u)}}{{\partial u_{kl} }}u_k u_l |D u|^{-3}}$$ is the mean
curvature of the level sets;
When $m=n-1$, $$K=\sum\limits_{k,l = 1}^n {\frac{{\partial \sigma _{n} (D^2 u)}}{{\partial u_{kl} }}u_k u_l |D u|^{-n-1}}$$ is the
Gauss curvature of the level sets.

\begin{theorem}\label{t1.1}
Let $\Omega \subset \R^n$ be a bounded convex domain, $n\geq 2$, $u$ the strictly convex solution of \eqref{61.1}. Then the function
\[ \varphi=\sum\limits_{k,l = 1}^n {\frac{{\partial \sigma _n (D^2 u)}}{{\partial u_{kl} }}u_k u_l }-2u
\]
attains its maximum on the boundary $\partial \Omega$.
\end{theorem}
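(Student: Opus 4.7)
The plan is to show that $\varphi$ satisfies the differential inequality $L\varphi\ge 0$ throughout $\Omega$, where $L:=u^{ij}\partial_i\partial_j$ is the linearized Monge--Amp\`ere operator and $(u^{ij})$ denotes the inverse of the Hessian $(u_{ij})$. Since strict convexity of $u$ makes $L$ uniformly elliptic on compact subdomains, the classical weak maximum principle then gives $\max_{\overline{\Omega}}\varphi=\max_{\partial\Omega}\varphi$ at once.

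To set things up I would first use the equation to simplify $\varphi$: because $\det D^2u=1$, the cofactor matrix $\partial\sigma_n(D^2u)/\partial u_{kl}$ equals $u^{kl}$, so $\varphi=u^{ij}u_iu_j-2u$. I would then record three identities that do all the work: the standard $(u^{ij})_k=-u^{ip}u^{jq}u_{pqk}$; the Jacobi-type relation $u^{ij}u_{ijk}=0$ coming from $\partial_k\log\det D^2u=0$; and its once-differentiated form $u^{ij}u_{ijkl}=u^{ip}u^{jq}u_{pqk}u_{ijl}$. A direct computation, using $u^{ij}u_{ik}=\delta^j_k$, gives the cancellation
\[\varphi_k=(u^{ij})_ku_iu_j+2u^{ij}u_{ik}u_j-2u_k=(u^{ij})_ku_iu_j,\]
and this is precisely why $-2u$ appears in the definition of $\varphi$.

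Differentiating once more and contracting with $u^{kl}$ writes $L\varphi$ as a ``mixed'' piece $2u^{kl}(u^{ij})_ku_{il}u_j$ plus a ``pure third-order'' piece $u^{kl}(u^{ij})_{kl}u_iu_j$. At any interior point I would rotate coordinates so that $D^2u$ is diagonal with eigenvalues $\lambda_i=u_{ii}$. The mixed piece then reduces to $-2\sum_{j,k}u_{jkk}u_j/(\lambda_j\lambda_k)$, which vanishes because $u^{ij}u_{ijk}=0$ forces $\sum_k u_{jkk}/\lambda_k=0$ for every $j$. In the pure third-order piece, substituting the formulas for $(u^{ij})_k$ and $(u^{ij})_{kl}$ produces two quartic tensor sums in the third derivatives $u_{ikl}$: one with coefficient $+2$ coming from the $(u^{ip})_l$ and $(u^{jq})_l$ factors, and one with coefficient $-1$ coming from $u^{ip}u^{jq}u_{pqkl}$. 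The twice-differentiated equation turns the latter into exactly the same quartic as the former, leaving
\[L\varphi=\sum_{k,l}\frac{1}{\lambda_k\lambda_l}\Big(\sum_i\frac{u_iu_{ikl}}{\lambda_i}\Big)^2\ge 0,\]
as required. The main obstacle is purely combinatorial bookkeeping of the quartic tensor contractions; the structural point is that the $-2u$ kills the first-derivative contribution to $\varphi_k$, the identity $u^{ij}u_{ijk}=0$ kills the entire mixed piece of $L\varphi$, and then the twice-differentiated equation collapses what remains into a manifest sum of squares.
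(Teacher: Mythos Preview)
Your argument is correct and is essentially the same as the paper's: both simplify $\varphi$ to $u^{ij}u_iu_j-2u$, diagonalize $D^2u$ at a point, use the once-differentiated equation (equivalently the divergence-free property of $u^{ij}$ when $\det D^2u=1$) to kill the mixed term, and then use the twice-differentiated equation to collapse the remainder into the sum of squares $\sum_{i,j}u^{ii}u^{jj}\bigl(\sum_k u^{kk}u_k u_{ijk}\bigr)^2\ge 0$. The only cosmetic difference is that the paper phrases the vanishing of the mixed piece as $\sum_i(u^{il})_i=0$, whereas you invoke $u^{ij}u_{ijk}=0$ directly; these are the same identity in diagonal coordinates.
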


\begin{theorem}\label{t1.2}
Under the same assumptions as in the above theorem, we have the function
\[
\psi=\sum\limits_{k,l = 1}^n {\frac{{\partial \sigma _2 (D^2 u)}}{{\partial u_{kl} }}u_k u_l }-2(n-1)u
\]
also attains its maximum  on the boundary $\partial \Omega$. Moreover, $\psi$ attains its maximum in $\Omega$ if and only if $\Omega$ is
an ellipse for $n=2$ or a ball for $n\geq 3$.
\end{theorem}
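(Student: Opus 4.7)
The plan is to apply the maximum principle to $\psi$ via the linearized Monge--Amp\`ere operator
\[
L=\sum_{i,j}u^{ij}\partial_i\partial_j,\qquad (u^{ij})=(u_{ij})^{-1},
\]
which is elliptic on the set where $u$ is strictly convex. Differentiating $\log\det D^2u=0$ once and twice gives the standard identities
\[
u^{ij}u_{ij\alpha}=0,\qquad u^{ij}u_{ij\alpha\beta}=u^{ip}u^{jq}u_{ij\alpha}u_{pq\beta},
\]
which will be the workhorse. If I can show $L\psi\geq 0$ in $\Omega$, the first assertion follows from the weak maximum principle, and the rigidity will follow from the strong maximum principle combined with an analysis of the equality case.

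Rewriting $\psi=(\Delta u)|Du|^2-\sum_{k,l}u_{kl}u_ku_l-2(n-1)u$, I would compute $L\psi$ at an arbitrary interior point after choosing an orthonormal frame diagonalizing $D^2u$, writing $u_{ij}=\lambda_i\delta_{ij}$ with $\prod_i\lambda_i=1$ and $u^{ij}=\delta_{ij}/\lambda_i$. The easiest piece $L(-2(n-1)u)=-2n(n-1)$ is a negative constant. Expanding $L[(\Delta u)|Du|^2]$ and $L[\sum u_{kl}u_ku_l]$ produces many terms, but the first identity above kills all ``single-trace'' third-derivative contributions, while the second identity converts the surviving fourth derivatives into a quadratic form in $u_{ijk}$; what remains is an explicit combination of terms in $u_k$, $\lambda_i$, and $u_{ijk}$.

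The main technical obstacle is then to verify that this surviving expression is pointwise nonnegative. The special coefficient $2(n-1)$ in the definition of $\psi$ is tuned precisely so that the negative constant $-2n(n-1)$ is compensated (using $\sum\lambda_i\geq n$ from AM--GM applied to $\prod\lambda_i=1$) and so that the third-derivative terms rearrange into a sum of squares with nonnegative coefficients. Keeping careful track of whether each index on $u_{ijk}$ matches a gradient index is the bulk of the bookkeeping; I would split the $u_{ijk}$-sum into blocks depending on the coincidence pattern of $(i,j,k)$ with the directions where $u_\alpha\neq 0$, and check positivity of each block separately.

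For the rigidity, suppose $\psi$ attains its maximum at an interior point. The strong maximum principle applied to $L\psi\geq 0$ forces $\psi\equiv\text{const}$, which in turn forces equality in the nonnegativity argument, and hence $u_{ijk}\equiv 0$. Therefore $u$ is a quadratic polynomial $u(x)=\tfrac12\langle A(x-x_0),x-x_0\rangle-c$ with $A>0$, $\det A=1$, and $\partial\Omega$ is the ellipsoid $\{u=0\}$. Substituting this $u$ back into the definition of $\psi$ yields
\[
\psi=\bigl\langle[(\operatorname{tr}A)A^2-A^3-(n-1)A](x-x_0),\,x-x_0\bigr\rangle+2(n-1)c,
\]
so constancy of $\psi$ is equivalent to the matrix identity $A^2-(\operatorname{tr}A)A+(n-1)I=0$. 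For $n=2$ this is just the Cayley--Hamilton theorem together with $\det A=1$, automatically true, so $\Omega$ is an arbitrary ellipse. For $n\geq 3$, every eigenvalue $\lambda_i$ of $A$ must be a root of the scalar quadratic $\lambda^2-S\lambda+(n-1)=0$ with $S=\operatorname{tr}A$; combined with $\sum\lambda_i=S$ and $\prod\lambda_i=1$, a short case analysis forces $\lambda_1=\cdots=\lambda_n=1$, so $A=I$ and $\Omega$ is a ball.
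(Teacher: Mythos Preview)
Your overall strategy coincides with the paper's: compute $L\psi$ at a point where $D^2u$ is diagonal, use the linearized identities $u^{ij}u_{ij\alpha}=0$ and $u^{ij}u_{ij\alpha\beta}=u^{ip}u^{jq}u_{ij\alpha}u_{pq\beta}$ to convert fourth derivatives into a quadratic in third derivatives, and then split $L\psi$ into a third-derivative part handled by Cauchy--Schwarz and a zero-order part. The calculation indeed yields
\[
L\psi=\sum_{i,j}u^{ii}u^{jj}\Bigl(|Du|^2\sum_k u_{ijk}^2-\bigl(\textstyle\sum_k u_{ijk}u_k\bigr)^2\Bigr)+\bigl(4\sigma_2(D^2u)-2n(n-1)\bigr),
\]
exactly as in the paper.

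However, your proposed treatment of the zero-order term is wrong. You write that $-2n(n-1)$ is ``compensated using $\sum\lambda_i\geq n$ from AM--GM,'' but the quantity that actually appears is $4\sigma_2(D^2u)$, not a multiple of $\sigma_1$. Knowing only $\sigma_1\geq n$ gives no lower bound on $\sigma_2$ (take $\lambda_1$ large and the rest small with product $1$: $\sigma_1^2$ and $\sum\lambda_i^2$ both blow up and their difference is not controlled by $\sigma_1$ alone). What is needed is the Newton--Maclaurin inequality $\sigma_2\geq\binom{n}{2}\sigma_n^{2/n}=\binom{n}{2}$, which is what the paper invokes. This is a genuine gap: following your sketch literally one cannot close the inequality.

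For the rigidity, your organization differs from the paper's. For $n\geq 3$ the paper reads off $D^2u\equiv I$ directly from equality in Newton's inequality, while you pass through $u_{ijk}\equiv 0$ and a matrix equation; your route is valid but redundant in that case. For $n=2$ your Cayley--Hamilton observation is neat, but the step ``equality forces $u_{ijk}\equiv 0$'' needs justification: equality in Cauchy--Schwarz only gives $(u_{ij1},\dots,u_{ijn})\parallel Du$ for each $(i,j)$, and one must then combine the full symmetry $u_{ijk}=u_{ikj}=u_{jki}$ with $\sum_i u^{ii}u_{iik}=0$ to conclude that all third derivatives vanish away from the critical point. The paper carries out precisely this linear-algebra reduction in the $n=2$ case.
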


Naturally, we have the following corollary.
\begin{corollary}\label{c1}
Let $\Omega$ be a smooth, bounded and strictly convex domain in $\R^n$, $n\geq 2$. If $u$ is the solution of the problem \eqref{61.1}, then the functions $K|D u|^{n+1}$ and $H|D u|^{3}$ attain their maximum only at the boundary ~$\partial \Omega$. Thus, for $ x\in \Omega\setminus\Omega'$, we have the following estimates
$$ K(x)< \frac{\max_{\partial\Omega}K \max_{\partial\Omega'}\kappa_M^{n+1}}{\min_{\partial\Omega}\kappa_m^{n+1}}$$
and
$$H(x)< \frac{\max_{\partial\Omega}H \max_{\partial\Omega'}\kappa_M^{n+1}}{\min_{\partial\Omega}\kappa_m^{n+1}},$$
where $\Omega'=\{x\in \Omega \ |u(x)<c, c\in(\min_{\Omega}u, 0)\; \text{is a constant}\}$, $\kappa_m, \kappa_M$ are  minimal and maximal principal curvatures of the level sets at a point respectively.
\end{corollary}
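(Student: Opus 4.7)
The plan is to deduce the first assertion directly from Theorems~\ref{t1.1} and \ref{t1.2} via strict negativity of $u$ in the interior, then upgrade the qualitative ``max on $\partial\Omega$'' statement to the quantitative bound on $\Omega\setminus\Omega'$ by relating $|Du|$ to principal curvatures on level sets.

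By the curvature formulas preceding the theorems, $\varphi = K|Du|^{n+1} - 2u$ and $\psi = H|Du|^{3} - 2(n-1)u$. Because $u$ is the strictly convex solution of \eqref{61.1}, $u<0$ throughout $\Omega$ and $u=0$ on $\partial\Omega$; Theorem~\ref{t1.1} then gives, for every $x\in\Omega$,
\[
K(x)|Du(x)|^{n+1} = \varphi(x) + 2u(x) < \varphi(x) \le \max_{\partial\Omega}\varphi = \max_{\partial\Omega}K|Du|^{n+1},
\]
strictly because $u(x)<0$, and Theorem~\ref{t1.2} gives the exact analogue for $H|Du|^{3}$. Thus both functions attain their suprema only on $\partial\Omega$.

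For the explicit estimate, I would rearrange the inequality as
\[
K(x) < \frac{\max_{\partial\Omega}K\cdot\max_{\partial\Omega}|Du|^{n+1}}{|Du(x)|^{n+1}}
\]
and convert each remaining gradient factor into a curvature factor using the level-set geometry. In a tangent/normal frame on a level set $\{u=c\}$, $D^2u$ decomposes into a tangential block $A$ and a mixed component $b$; the principal curvatures of the level set are $\kappa_i=\lambda_i(A)/|Du|$ (so $K|Du|^{n-1}=\det A$), while $\det D^2u=1$ becomes $\det A\cdot(u_{\nu\nu}-b^T A^{-1}b)=1$. At a tangentially critical point of $|Du|$ the vector $b$ vanishes (from $\partial_\alpha|Du|^{2}=2u_nu_{\alpha n}$), reducing this to $\det A\cdot u_{\nu\nu}=1$. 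Using this together with $\kappa_m^{n-1}\le\det A/|Du|^{n-1}\le\kappa_M^{n-1}$ and AM--GM on the $\lambda_i(A)$, I expect to extract
\[
\max_{\partial\Omega}|Du|^{n+1}\le\frac{1}{\min_{\partial\Omega}\kappa_m^{n+1}},\qquad |Du(x)|^{n+1}\ge\frac{1}{\max_{\partial\Omega'}\kappa_M^{n+1}}\ \ (x\in\Omega\setminus\Omega').
\]
The second bound is transported from $\partial\Omega'$ to the whole annular region $\{u\ge c\}$ using that $|Du|^{2}$ is non-decreasing along the gradient flow of $u$, since $\frac{d}{dt}|Du|^{2}=2\langle D^2 u\,Du,\dot\gamma\rangle\ge 0$ by positivity of $D^2u$. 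Substituting these inequalities yields the stated estimate for $K$, and the estimate for $H$ follows identically from $\psi$.

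The main obstacle is the gradient/curvature comparison just described. The naive pointwise inequality $|Du|\kappa_m\le 1$ on $\partial\Omega$ genuinely fails: for $u=\frac{b}{2a}x^{2}+\frac{a}{2b}y^{2}-\frac{ab}{2}$ on the ellipse $\{x^{2}/a^{2}+y^{2}/b^{2}\le 1\}$ with $a>b$ one has $|Du|\kappa=a/b>1$ at the vertex $(a,0)$. The comparison must therefore be understood in the global max/min sense of the corollary and will need to be extracted by analyzing $\det A\cdot u_{\nu\nu}=1$ at the extremal boundary points where $b=0$.
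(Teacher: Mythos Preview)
Your first step---deducing that $K|Du|^{n+1}$ and $H|Du|^3$ attain their maxima strictly on $\partial\Omega$ from Theorems~\ref{t1.1}, \ref{t1.2} and $u<0$ in $\Omega$---is correct and is exactly what the paper does. Likewise, your monotonicity argument for $|Du|^2$ along the gradient flow (positivity of $\langle D^2u\,Du,Du\rangle$) is the same observation the paper uses to transport the lower bound from $\partial\Omega'$ to all of $\Omega\setminus\Omega'$.

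The gap is in your gradient--curvature comparison. At a tangential extremum of $|Du|$ on a level set you correctly get the block form $\det A\cdot u_{\nu\nu}=1$ with $\det A=|Du|^{n-1}\prod_i\kappa_i$, but this identity still carries the uncontrolled factor $u_{\nu\nu}$; neither the first-order criticality condition nor AM--GM on the tangential eigenvalues gives you a lower bound on $u_{\nu\nu}$ in terms of boundary curvatures. Your own ellipse example already hints that a purely local/pointwise argument cannot close this, and the ``extremal points where $b=0$'' route does not resolve it either.

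The paper bypasses this entirely with a comparison argument (its Lemma preceding the proof of the corollary): for any $x\in\partial\Omega$, sandwich $\Omega$ between balls $\Omega_1\subset\Omega\subset\Omega_0$ of radii $r=1/\max_{\partial\Omega}\kappa_M$ and $R=1/\min_{\partial\Omega}\kappa_m$ all passing through $x$, solve \eqref{61.1} explicitly on each ball ($u=\tfrac12(|y-y_0|^2-\rho^2)$, so $|Du|=\rho$ on the boundary), and use the comparison principle for $\det D^2u=1$ with zero boundary data to obtain
\[
\frac{1}{\max_{\partial\Omega}\kappa_M}\le |Du(x)|\le \frac{1}{\min_{\partial\Omega}\kappa_m},
\]
and the identical bound on $\partial\Omega'$. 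Plugging these into your displayed inequality for $K(x)$ (and the analogous one for $H(x)$) then finishes the corollary. So the missing idea is simply the ball comparison for the boundary gradient estimate, which replaces your intrinsic decomposition attempt.
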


It should be mentioned that for the case $n=2$, Ma \cite{Ma98} and Anedda-Porru \cite{AP06} considered the problem \eqref{61.1}
and arrived at the conclusion of Theorem \ref{t1.1}. When $n=2$, there is only one curvature $\kappa$ for  the level sets at a point,
so $\kappa = K=H$ and ~$\varphi=\psi$ in Theorem \ref{t1.1} and Theorem \ref{t1.2}.
And $\mathcal{H}=\kappa|D u|^3=\sum\limits_{k,l = 1}^2 \frac{{\partial \sigma _{2} (D^2 u)}}{{\partial u_{kl} }}u_k u_l$ is the
auxiliary function introduced by Hong-Huang-Wang \cite{HHW01}.

There are also many papers to study curvature estimates for the level sets of
solutions of partial differential equation, see \cite{Lo83,CMY10, ChSh10, GX10,
JMO08To, MOZ10} etc..

This paper is organized as follows. In Section 2, we prove Theorem \ref{t1.1} through establishing a differential inequality for the
given function. In Section 3, using the same process as
 the proof of Theorem \ref{t1.1}, we prove the first results in Theorem \ref{t1.2}. Through the computation of the third derivatives
 for the solution $u$, we prove the relation between $\psi$ attaining its maximum in the interior and the shape of the domain $\Omega$.
 Finally, we prove the corollary and give some remarks.

\section{Proof of Theorem \ref{t1.1}}

\begin{proof} Let $D^2u=(u_{ij}), (u^{ij})=(u_{ij})^{-1}.$ Because $u$ is the convex solution of the equation
$\sigma _n (D^2 u)=\det{(D^2 u)}=1$, $(u_{ij})$ is positive definite and
$\frac{{\partial \sigma _n (D^2 u)}}{{\partial u_{kl} }}=u^{kl}.$ Therefore,
\begin{equation}
\varphi = \sum\limits_{k,l = 1}^n {\frac{{\partial \sigma _n (D^2 u)}}{{\partial u_{kl} }}u_k u_l-2u=\sum\limits_{k,l = 1}^n u^{kl}u_k u_l}-2u.
\end{equation}
We will prove the following differential inequality:

\begin{equation}\label{2.3}
\sum\limits_{i,j= 1}^n u^{ij} \varphi _{ij} \geq 0 \quad \text{ in } \Omega.
\end{equation}
From the differential inequality, by the maximum principle, $\varphi$ attains its maximum on the boundary $\partial \Omega$.

In the following, we will prove \eqref{2.3}. For any $x_o \in \Omega$, we choose coordinates such that
$(u_{ij}(x_o))$ is diagonal. All the following calculations are done at $x_o$.

Let ${u^{ij}}_{k}=\frac{\partial u^{ij}}{\partial x_k}, {u^{ij}}_{kk}=\frac{\partial^2 u^{ij}}{\partial x_k^2}$. From the direct computations, we have
\begin{eqnarray}\label{2.4}
\varphi_i&=&(\sum\limits_{k,l = 1}^n u^{kl}u_ku_l)_i-2u_i\nonumber\\
&=&\sum\limits_{k,l = 1}^n({u^{kl}}_iu_ku_l+2u^{kl}u_{ki}u_l)-2u_i\nonumber\\
&=&\sum\limits_{k,l = 1}^n{u^{kl}}_iu_ku_l
\end{eqnarray}
and
\begin{eqnarray}\label{2.5}
\varphi_{ii}&=&\sum\limits_{k,l = 1}^n({u^{kl}}_{ii}u_ku_l+2{u^{kl}}_iu_{ki}u_l)\nonumber\\
&=&\sum\limits_{k,l = 1}^n{u^{kl}}_{ii}u_ku_l+2\sum\limits_{l = 1}^n{u^{il}}_iu_{ii}u_l.
\end{eqnarray}
Thus
\begin{eqnarray}\label{2.6}
&&\sum\limits_{i,j = 1}^nu^{ij}\varphi_{ij}=\sum\limits_{i= 1}^nu^{ii}\varphi_{ii}\nonumber\\
&=&\sum\limits_{i,k,l = 1}^nu^{ii}{u^{kl}}_{ii}u_ku_l+2\sum\limits_{i,l = 1}^n{u^{il}}_iu_l\nonumber\\
&=&\sum\limits_{i,k,l = 1}^nu^{ii}{u^{kl}}_{ii}u_ku_l,
\end{eqnarray}
where we have used $\sum\limits_{i = 1}^n{u^{il}}_i=0$ in the last equality above.
Since
\begin{eqnarray}\label{2.61}
&&{u^{kl}}_{ii}=(-\sum\limits_{p,q = 1}^nu^{kq}u^{pl}u_{pqi})_i\nonumber\\
&=&\sum\limits_{p,q,r,s= 1}^n(u^{ks}u^{rq}u^{pl}+u^{kq}u^{ps}u^{rl})u_{pqi}u_{rsi}-\sum\limits_{p,q = 1}^nu^{kq}u^{pl}u_{pqii}\nonumber\\
&=&2\sum\limits_{j= 1}^nu^{kk}u^{ll}u^{jj}u_{jki}u_{jli}-u^{kk}u^{ll}u_{klii},
\end{eqnarray}
substituting ~\eqref{2.61} into ~\eqref{2.6}, we obtain
\begin{eqnarray}\label{2.7}
&&\sum\limits_{i,j = 1}^nu^{ij}\varphi_{ij}=\sum\limits_{i= 1}^nu^{ii}\varphi_{ii}\nonumber\\
&=&2\sum\limits_{i,j,k,l = 1}^nu^{ii}u^{jj}u^{kk}u^{ll}u_{ijk}u_{ijl}u_ku_l-\sum\limits_{i,k,l = 1}^nu^{kk}u^{ll}u^{ii}u_{iikl}u_ku_l.
\end{eqnarray}
Because of the equation ~$\det(u_{ij})=1$, differentiating it once, we can get
\begin{equation*}
\sum\limits_{i,j= 1}^nu^{ij} u_{ijk} = 0,
\end{equation*}
i.e.
\begin{equation}\label{2.8}
\sum\limits_{i= 1}^nu^{ii} u_{iik} = 0.
\end{equation}
Differentiating the equation once again, we have
$$
\sum\limits_{i,j= 1}^n u^{ij}u_{ijkl} + \sum\limits_{i,j,p,q = 1}^n(-u^{iq}u^{pj} u_{ijk} u_{pql}) = 0,
$$
i.e.
\begin{equation} \label{2.9}
\sum\limits_{i= 1}^n u^{ii}u_{iikl}= \sum\limits_{i,j = 1}^nu^{ii}u^{jj} u_{ijk} u_{ijl}.
\end{equation}
Substituting \eqref{2.9} into \eqref{2.7}, we obtain
\begin{eqnarray}\label{2.10}
&&\sum\limits_{i,j = 1}^nu^{ij}\varphi_{ij}=\sum\limits_{i= 1}^nu^{ii}\varphi_{ii}\nonumber\\
&=&2\sum\limits_{i,j,k,l = 1}^nu^{ii}u^{jj}u^{kk}u^{ll}u_{ijk}u_{ijl}u_ku_l-\sum\limits_{i,j,k,l = 1}^nu^{kk}u^{ll}u^{ii}u^{jj} u_{ijk} u_{ijl}u_ku_l\nonumber\\
&=&\sum\limits_{i,j,k,l = 1}^nu^{ii}u^{jj}u^{kk}u^{ll}u_{ijk}u_{ijl}u_ku_l\nonumber\\
&=&\sum\limits_{i,j= 1}^nu^{ii}u^{jj}(\sum\limits_{k= 1}^nu_{ijk}u^{kk}u_k)^2\nonumber\\
&\geq& 0.
\end{eqnarray}
We have completed the proof of Theorem \ref{t1.1}. \end{proof}

\begin{remark}\label{r1}

When $n=2$ , Ma \cite{Ma98} and  Anedda-Porru \cite{AP06} gave the result of Theorem \ref{t1.1} and furthermore pointed out that~$\varphi$ assumes its minimum on~$\partial \Omega$ or at the unique critical point $x_0$ of $u$, i.e. the point where ~$D u=0$. We can also get the conclusion from the above proof directly. In fact, from~\eqref{2.4}, we get
$$-\varphi_i=\sum_{k,l=1}^2 u^{kk}{u^{ll}}u_{kli}u_ku_l,$$
that is
\begin{equation*}
\left \{
\begin{array}{l}
(u^{11}u_1)^2u_{111}+2u_1u_2u_{112}+(u^{22}u_2)^2u_{122}=-\varphi_1\\[8pt]
(u^{11}u_1)^2u_{112}+2u_1u_2u_{122}+(u^{22}u_2)^2u_{222}=-\varphi_2,
\end{array}\right.
\end{equation*}
Here we have used the equation~$\det(u_{ij})=1$. Combining with \eqref{2.8}, that is
\begin{equation*}
\left \{
\begin{array}{l}
u^{11}u_{111}+u^{22}u_{122}=0\\[8pt]
u^{11}u_{112}+u^{22}u_{222}=0,
\end{array}\right.
\end{equation*}
Under the case of modulo $D \varphi$, we obtain the homogeneous linear algebraic system about the third derivatives $u_{111}, u_{112}, u_{122}, u_{222}$ of $u$,
\begin{equation}\label{2.11}
\left \{
\begin{array}{l}
u^{11}u_{111}+u^{22}u_{122}=0\\[8pt]
u^{11}u_{112}+u^{22}u_{222}=0\\[8pt]
(u^{11}u_1)^2u_{111}+2u_1u_2u_{112}+(u^{22}u_2)^2u_{122}=0\\[8pt]
(u^{11}u_1)^2u_{112}+2u_1u_2u_{122}+(u^{22}u_2)^2u_{222}=0.
\end{array}\right.
\end{equation}

From direct computations, we get that the determinant of the coefficient matrix is $(u^{11}u_1^2+u^{22}u_2^2)^2$, which is greater than 0 in $\Omega\setminus\{x_0\}$.
Therefore $u_{111}=u_{112}=u_{122}=u_{222}=0\quad {\rm{mod}}\ (D \varphi)$.
Consequently, from \eqref{2.10}, we have
$$ \sum\limits_{i,j = 1}^2u^{ij}\varphi_{ij}=0\quad {\rm{mod}}\ (D \varphi)\ \ {\rm{in}}\ \Omega\setminus \{x_0\}.$$
and by maximum principle, $\varphi$ attains its minimum on ~$\partial \Omega$ or at the unique critical points $x_0$.
\end{remark}

\section{Proof of Theorem \ref{t1.2}}

\begin{proof} Let ${\frac{{\partial \sigma _2 (D^2 u)}}{{\partial u_{kl} }}}= b^{kl}$. Then
\begin{equation} \label{3.1}
\psi  = \sum\limits_{k,l = 1}^n b^{kl}u_k u_l -2(n-1)u.
\end{equation}
We will prove the following differential inequality:
\begin{equation}\label{3.2}
\sum\limits_{i,j= 1}^n {u^{ij} \psi _{ij } } \geq 0 \quad \text{ in } \Omega.
\end{equation}
From the differential inequality, by the maximum principle, $\psi$ attains its maximum on the boundary $\partial \Omega$.

In the following, we will prove the differential equality \eqref{3.2}. For any $x_o \in \Omega$, we choose coordinates such that
$D^2u(x_o)$ is diagonal. All the following calculations are done at $x_o$.

Let ${b^{ij}}_{k}=\frac{\partial b^{ij}}{\partial x_k}, {b^{ij}}_{kk}=\frac{\partial^2 b^{ij}}{\partial x_k^2}$. From the direct computations, we have
\begin{eqnarray}
\psi_i&=&(\sum\limits_{k,l = 1}^n b^{kl}u_ku_l)_i-2(n-1)u_i\nonumber\\
&=&\sum\limits_{k,l = 1}^n({b^{kl}}_iu_ku_l+2b^{kl}u_{ki}u_l)-2(n-1)u_i\nonumber\\
&=&\sum\limits_{k,l = 1}^n{b^{kl}}_iu_ku_l+2b^{ii}u_{ii}u_i-2(n-1)u_i
\end{eqnarray}
and
\begin{eqnarray}
\psi_{ii}&=&\sum\limits_{k,l = 1}^n({b^{kl}}_{ii}u_ku_l+4{b^{kl}}_iu_{ki}u_l+2b^{kl}u_{kii}u_l+2b^{kl}u_{ki}u_{li})\nonumber\\
&&-2(n-1)u_{ii}\nonumber\\
&=&\sum\limits_{k,l = 1}^n{b^{kl}}_{ii}u_ku_l+4\sum\limits_{l = 1}^n{b^{il}}_iu_{ii}u_l+2\sum\limits_{k = 1}^nb^{kk}u_{kii}u_k\nonumber\\
&&+2b^{ii}u_{ii}^2-2(n-1)u_{ii}.
\end{eqnarray}
Therefore,
\begin{eqnarray}\label{3.3}
&&\sum\limits_{i,j = 1}^nu^{ij}\psi_{ij}=\sum\limits_{i= 1}^nu^{ii}\psi_{ii}\nonumber\\
&=&\sum\limits_{i,k,l = 1}^nu^{ii}{b^{kl}}_{ii}u_ku_l+4\sum\limits_{i,l = 1}^n{b^{il}}_iu_l+2\sum\limits_{i,k = 1}^nb^{kk}u^{ii}u_{iik}u_k\nonumber\\
&&+2\sum\limits_{i = 1}^nb^{ii}u_{ii}-2n(n-1)\nonumber\\
&=&\sum\limits_{i,k,l = 1}^nu^{ii}{b^{kl}}_{ii}u_ku_l+4\sigma_2(D^2u)-2n(n-1),
\end{eqnarray}
where we have used ~\eqref{2.8} and
$$\sum\limits_{i = 1}^n{b^{il}}_i=0\quad\text{and}\quad \sum\limits_{i = 1}^nb^{ii}u_{ii}=2\sigma_2(D^2u)$$ in the last equality above.
Since
\begin{equation*}
{b^{kl}}=
\begin{cases}
\sum\limits_{j= 1,j\neq k}^n u_{jj},&\quad k=l,\\
\qquad\ -u_{kl},&\quad k\neq l,
\end{cases}
\end{equation*}
we have
\begin{equation}\label{3.31}
{b^{kl}}_{ii}=
\begin{cases}
\sum\limits_{j= 1,j\neq k}^n u_{jjii},&\quad k=l,\\
\qquad\ -u_{klii},&\quad k\neq l.
\end{cases}
\end{equation}
Substituting \eqref{3.31} into \eqref{3.3}, we can get
\begin{eqnarray}\label{3.4}
& &\sum\limits_{i,j = 1}^nu^{ij}\psi_{ij}=\sum\limits_{i= 1}^nu^{ii}\psi_{ii}\nonumber\\
&=&\sum\limits_{\substack{i,j,k= 1\\j\neq k}}^nu^{ii}u_{iijj}u_k^2-\sum\limits_{\substack{i,k,l = 1\\k\neq l}}^n u^{ii}u_{iikl}u_ku_l\nonumber\\
&&+(4\sigma_2(D^2u)-2n(n-1))\nonumber\\
&=& \sum\limits_{\substack{i,j,k,l=1\\ k\neq l}}^n (u^{ii}u^{jj}u_{ijl}^2u_k^2-u^{ii}u^{jj}u_{ijk}u_{ijl}u_ku_l)\nonumber\\
&&+(4\sigma_2(D^2u)-2n(n-1)),
\end{eqnarray}
where we have used \eqref{2.9} in the last equality above.
We also have
\begin{equation}\label{3.5}
\sigma_2(D^2u) \geq C_n^2 (\sigma_n(D^2u))^{\frac{2}{n}} = C_n^2 = \frac{n(n-1)}{2}
\end{equation}
by Newton's inequality (see Hardy-Littlewood-P\'{o}lya \cite[section 2.22]{HLP}) and
\begin{equation}\label{3.6}
\sum\limits_{k,l=1}^n(u_{ijl}^2u_k^2-u_{ijk}u_{ijl}u_ku_l)\geq 0
\end{equation}
by Cauchy-Schwarz's inequality. Combining \eqref{3.4}, \eqref{3.5} and \eqref{3.6}, We can obtain that
\begin{eqnarray}\label{3.7}
&&\sum\limits_{i,j = 1}^n u^{ij}\psi_{ij}=\sum\limits_{i= 1}^nu^{ii}\psi_{ii}\nonumber\\
&=&\sum\limits_{\substack{i,j,k,l=1\\k\neq l}}^n (u^{ii}u^{jj}u_{ijl}^2u_k^2-u^{ii}u^{jj}u_{ijk}u_{ijl}u_ku_l)\nonumber\\
&&+(4\sigma_2(D^2u)-2n(n-1))\nonumber\\
&=&\sum\limits_{i,j=1}^n (u^{ii}u^{jj}\sum\limits_{k,l=1}^n(u_{ijl}^2u_k^2-u_{ijk}u_{ijl}u_ku_l))\nonumber\\
&&+(4\sigma_2(D^2u)-2n(n-1))\nonumber\\
&\geq& 0.
\end{eqnarray}

Furthermore, when $n=2$, if $\psi$ attains its maximum in $\Omega$, then $\psi=\varphi$ and  $\Omega$ is an ellipse by Remark \ref{r1} and vice versa. We also can get that $\Omega$ is an ellipse from \eqref{3.7}. In fact, if $\psi$ attains its maximum in $\Omega$, then ~$\psi$ is a constant in $\Omega$. So~\eqref{3.7} is the equality, that is
\begin{eqnarray*}
0&=&\sum\limits_{i,j=1}^2 (u^{ii}u^{jj}\sum\limits_{k,l=1}^n(u_{ijl}^2u_k^2-u_{ijk}u_{ijl}u_ku_l))\nonumber\\
&=&(u^{11})^2(u_{111}u_2-u_{112}u_1)^2+(u^{22})^2(u_{122}u_2-u_{222}u_1)^2+2(u_{112}u_2-u_{122}u_1)^2.
\end{eqnarray*}
Because $u^{11}>0, u^{22}>0$,
\begin{equation}\label{3.8}
\left\{
\begin{array}{l}
u_{111}u_2-u_{112}u_1=0\\[8pt]
u_{122}u_2-u_{222}u_1=0\\[8pt]
u_{112}u_2-u_{122}u_1=0.
\end{array}\right.
\end{equation}
Since~$u$ is strictly convex, it has the unique critical point $x_0$,  $(u_1,u_2)\neq (0,0)$ in $\Omega\setminus\{x_0\}$. From the theory of linear algebraic systems, we have that the rank of the coefficient matrix of the system \eqref{3.8} about $u_1, u_2$ is less than 2 and so
\begin{equation}\label{3.9}
\left\{
\begin{array}{l}
u_{112}^2-u_{111}u_{122}=0\\[8pt]
u_{122}^2-u_{112}u_{222}=0\\[8pt]
u_{111}u_{222}-u_{112}u_{122}=0.
\end{array}\right.
\end{equation}
By \eqref{2.8}, we get
\begin{equation}\label{3.10}
\left\{
\begin{array}{l}
u_{111}u^{11}+u_{122}u^{22}=0\\[8pt]
u_{112}u^{11}+u_{222}u^{22}=0.
\end{array}\right.
\end{equation}
Combining \eqref{3.9} with \eqref{3.10}, we obtain
$$u_{111}=u_{112}=u_{122}=u_{222}=0\;  \text{in}\;\Omega\setminus\{x_0\}.$$
and so all the third derivatives of $u$ vanish in~$\Omega$ by the continuousness. Consequently,
$\Omega=\{u< 0\}$ must be an ellipse.

When $n\geq 3$,  if $\psi$ attains its maximum in $\Omega$, then $\psi$ is a constant in $\Omega$. So \eqref{3.7} is the equality and we must have $4\sigma_2(D^2u)-2n(n-1)=0$, that is, the equality holds in \eqref{3.5}. But the equality holds in the Newton's inequality if and only if all the eigenvalues of $D^2u$ are equal. Therefore, the eigenvalues of $D^2u$ are equal to $1$ by the equation $\det D^2 u=1$ and $D^2 u$ is the unit matrix. Consequently,  $$u=\frac12 (|x-x_0|^2- r^2),$$
where $x_0\in \R^n$ is a fixed point and $r>0$ is a constant, and $\Omega=\{u< 0\}=B_r(x_0)$ is a ball. On the other hand, if $\Omega=B_r(x_0)$ is a ball, then the solution for the problem \eqref{61.1} is $u=\frac12 (|x-x_0|^2- r^2)$ and $\psi \equiv (n-1)r^2$ is a constant.

We have completed the proof of the Theorem \ref{t1.2}.\end{proof}

\section{Proof of the Corollary \ref{c1}}

We firstly give the boundary estimate of the gradient $D u$ for the solution of \eqref{61.1}.
\begin{lemma}\label{l1}

Let $\Omega $ be a smooth, bounded and strictly convex domain in $\R^n$, $x\in \partial \Omega$ and $\kappa_i(x),i=1,2,\cdots, n-1,$ be the principal curvatures of $\partial \Omega$ at $x$.
Let
$$\kappa_m(x)=\min\{\kappa_i(x)| i=1,2,\cdots,n-1\},
\kappa_M(x)=\max\{\kappa_i(x)| i=1,2,\cdots,n-1\}.$$
If $u$ is the smooth and strictly convex solution of \eqref{61.1}, then  on the boundary $\partial \Omega$, $|D u|_{\partial \Omega}$ satisfies the following estimate:
\begin{equation}\label{6L1}
\frac{1}{\max_{\partial \Omega}\kappa_{M}}\leq |D u|_{\partial \Omega}\leq \frac{1}{\min_{\partial \Omega}\kappa_{m}}.
\end{equation}
The same estimate is true for $\Omega'=\{x\in \Omega \ |u(x)<c, c\in(\min_{\Omega} u, 0)\; \text{is a constant}\}$, that is,
on the boundary $\partial \Omega'$, $|D u|_{\partial \Omega'}$ satisfies
\begin{equation}\label{6L2}
\frac{1}{\max_{\partial \Omega'}\kappa_{M}}\leq |D u|_{\partial \Omega'}\leq \frac{1}{\min_{\partial \Omega'}\kappa_{m}}.
\end{equation}
\end{lemma}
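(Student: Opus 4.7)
The plan is to compare $u$ with the explicit radial solutions of $\det D^2 v = 1$ on balls, using inner and outer tangent balls provided by Blaschke's rolling theorem. For any ball $B_R(y_0)\subset\R^n$ the function $v(x)=\tfrac12(|x-y_0|^2-R^2)$ is convex, satisfies $\det D^2 v = 1$, vanishes on $\partial B_R(y_0)$, and has outer normal derivative equal to $R$ at every point of $\partial B_R(y_0)$. Since $u$ is strictly convex with $u=0$ on $\partial\Omega$ we have $u<0$ in $\Omega$ and $Du(x_0)\parallel\nu(x_0)$ for every $x_0\in\partial\Omega$, so $|Du(x_0)|=u_\nu(x_0)$; the lemma is thus a two-sided estimate for this normal derivative.

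Blaschke's rolling theorem asserts that a smooth strictly convex body admits, at every boundary point, an inner tangent ball of radius $R_1=1/\max_{\partial\Omega}\kappa_M$ contained in $\overline\Omega$ and an outer tangent ball of radius $R_2=1/\min_{\partial\Omega}\kappa_m$ containing $\overline\Omega$. In either case the tangency forces the center to be $y_j=x_0-R_j\nu(x_0)$, so the associated barrier $v$ satisfies $v_\nu(x_0)=R_j$.

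For the upper bound in \eqref{6L1}, use the outer barrier at $x_0$: $v\le 0=u$ on $\partial\Omega$ and $\det D^2 v=\det D^2 u=1$ in $\Omega$, so the Monge-Amp\`ere comparison principle for convex solutions gives $v\le u$ in $\overline\Omega$; the boundary matching $u(x_0)=v(x_0)=0$ then yields $u_\nu(x_0)\le v_\nu(x_0)=R_2$ by taking the one-sided derivative in the direction $-\nu(x_0)$. For the lower bound, the inner barrier gives $u\le 0=v$ on $\partial B_{R_1}(y_1)$, so the same comparison principle on $B_{R_1}(y_1)$ produces $u\le v$ there, and the same derivative argument at $x_0$ gives $u_\nu(x_0)\ge R_1$.

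The estimate \eqref{6L2} for $\partial\Omega'$ is immediate from the preceding paragraphs applied to $\tilde u:=u-c$, since $\tilde u$ solves $\det D^2\tilde u=1$ in $\Omega'$ with $\tilde u\equiv 0$ on $\partial\Omega'$, and $\Omega'=\{u<c\}$ is itself smooth and strictly convex (being a regular sublevel set of the smooth strictly convex function $u$). The only delicate ingredient is the Blaschke rolling input, producing balls whose radii equal precisely the reciprocals of the extremal curvatures; once this geometric step is in hand, each bound reduces to a one-line Monge-Amp\`ere barrier comparison.
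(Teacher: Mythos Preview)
Your proposal is correct and follows essentially the same approach as the paper: both arguments place inner and outer tangent balls of radii $1/\max_{\partial\Omega}\kappa_M$ and $1/\min_{\partial\Omega}\kappa_m$ at a given boundary point, compare $u$ with the explicit radial solutions $\tfrac12(|x-y|^2-R^2)$ on these balls via the Monge--Amp\`ere comparison principle, and read off the normal derivative bound. You are simply more explicit than the paper in naming Blaschke's rolling theorem and in spelling out the one-sided derivative argument; the paper compresses all of this into the single line ``Since $u$ vanishes on $\partial\Omega$, it follows immediately that $|Du_{\Omega_1}(x)|\le|Du(x)|\le|Du_{\Omega_0}(x)|$.''
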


\begin{proof}
For any boundary point $x$, let $\Omega\subseteq \Omega_0 $ and $\Omega_1\subseteq \Omega$ be two
balls of radius~$R=\frac{1}{\min_{\partial \Omega}\kappa_{m}}$ and $r=\frac1{\max_{\partial \Omega}\kappa_{M}}$ respectively and $x\in \bar{\Omega} \cap \bar{\Omega}_j, j = 0, 1$.
Let $u_{\Omega_j},j=0,1,$  be the solution to the problem
\begin{equation*}
\left\{
\begin{array}{lcl}
\det D^2u =1 &\mbox{in}& \Omega_j,\\[8pt]
\qquad\;\;\; u=0 &\mbox{on}& \partial \Omega_j.
\end{array} \right.
\end{equation*}

Since $u$ vanishes on $\partial \Omega$, it follows immediately that
$$|D u_{\Omega_1}(x)|\leq|D u(x)|\leq |D u_{\Omega_0}(x)|.$$
An explicit calculation yields
$$|D u_{\Omega_1}(x)|= r,\quad |D u_{\Omega_0}(x)|= R,$$
and so $$r\leq |D u(x)|\leq R.$$
Therefore,
$$\frac{1}{\max_{\partial \Omega}\kappa_{M}}\leq |D u|_{\partial \Omega}\leq \frac{1}{\min_{\partial \Omega}\kappa_{m}},$$
and \eqref{6L1} holds. For the same reasons, \eqref{6L2} also holds.
\end{proof}

Next, we start the proof of Corollary \ref{c1}.
\begin{proof}

By Theorem \ref{t1.1}, we have $K|D u|^{n+1}-2u$ takes its maximum on the boundary $\partial\Omega$. For any $x\in \Omega,$ we have
$$K(x)|D u(x)|^{n+1}-2u(x)\leq \max_{\partial\Omega}{K|D u|^{n+1}},$$
and so, by $u(x)<0$,
$$K(x)|D u(x)|^{n+1}\leq \max_{\partial\Omega}{K|D u|^{n+1}}+2u(x)< \max_{\partial\Omega}{K|D u|}.$$
Therefore $K|D u|^{n+1}$ attains its maximum only at the boundary $\partial \Omega$. For the same reasons, by Theorem \ref{t1.2}, we get
~$H|D u|^{3}$ also attains its maximum only at the boundary $\partial \Omega$.
Since $u$ is strictly convex,  $|D u|$ increases along the increasing direction of the level sets. By Lemma \ref{l1}, we have, for $x\in \Omega\setminus\Omega'$,
$$K(x)<\frac{\max_{\partial\Omega}(K|D u|^{n+1})}{|D u(x)|^{n+1}}\leq \frac{\max_{\partial\Omega}(K|D u|^{n+1})}{\min_{\partial\Omega'}|D u|^{n+1}}\leq \frac{\max_{\partial\Omega}K \max_{\partial\Omega'}\kappa_M^{n+1}}{\min_{\partial\Omega}\kappa_m^{n+1}}.$$
For the same reasons,
$$H(x)< \frac{\max_{\partial\Omega}H \max_{\partial\Omega'}\kappa_M^{n+1}}{\min_{\partial\Omega}\kappa_m^{n+1}}.$$
\end{proof}

\begin{remark}\label{r2}
When $n=2$, by Remark {\rm\ref{r1}},  $\varphi$ attains its minimum on~$\partial \Omega$ or at the unique critical point $x_0$ of $u$. Therefore, we can furthermore give the positive lower bound estimate for the curvature of the level lines. In fact,
for any $ x\in \Omega\setminus\Omega'$, we have
\begin{align*}
\kappa(x)|D u(x)|^{3}&\geq \min\{\min_{\partial \Omega}\kappa\min_{\partial \Omega}|D u|^{3}, -2u(x_0)\}+2u(x)\\
&\geq \min\{\min_{\partial \Omega}\kappa\min_{\partial \Omega}|D u|^{3}+2c, 2c-2u(x_0)\}.
\end{align*}
Since $$\min_{\partial \Omega}|D u|^{3}\geq\frac1{\max_{\partial \Omega}\kappa^{3}},$$
$$|D u(x)|^{3}\leq \max_{{\partial \Omega}}|D u|^{3}\leq \frac1{\min_{{\partial \Omega}}\kappa^{3}},$$
we obtain
$$\kappa(x)\geq (\min_{{\partial \Omega}}\kappa^{3})(\min\{\frac{\min_{\partial \Omega}\kappa} {\max_{\partial \Omega}\kappa^{3}}+2c, 2c-2u(x_0)\}).$$

\end{remark}

\begin{remark}
It is more interesting to obtain the lower bound estimate for the curvature of
 the level sets for Monge-Amp\`{e}re
equations in higher dimensions. If it is true then it may be helpful to improve
the regularity for solutions of degenerate elliptic Monge-Amp\`{e}re equations
in higher dimensions as in {\rm Hong-Huang-Wang \cite{HHW01}}.
\end{remark}

\end{document}